\def\EE{\mathbb{E}}
\def\VV{\mathbb{V}}
\def\RR{\mathbb{R}}
\def\ZZ{\mathbb{Z}}
\def\1{\mathbf{1}}
\def\eps{\varepsilon}
\def\mes{\operatorname{mes}}
\def\supp{\operatorname{supp}}
\theoremstyle{plain}
\newtheorem{thm}{Theorem}
\newtheorem*{thm*}{Theorem}
\newtheorem{lemma}[thm]{Lemma}
\newtheorem*{lemma*}{Lemma*}
\newtheorem{prop}[thm]{Proposition}
\newtheorem*{prop*}{Proposition}
\newtheorem*{cor*}{Corollary}
\newtheorem*{cl*}{Claim}
\theoremstyle{remark}
\newtheorem{rmk}[thm]{Remark}
\newtheorem*{not*}{Notation}
\theoremstyle{definition}
\newtheorem*{qn*}{Question}
\begin{document}

\title{One more proof of the Erd\H{o}s--Tur\'an inequality,\\
and an error estimate in Wigner's law.}
\author{Ohad N.\ Feldheim$^1$, Sasha Sodin$^{1,2}$}
\maketitle

\footnotetext[1]{[ohadf; sodinale]@post.tau.ac.il; address:
School of Mathematical Sciences, Tel Aviv University, Ramat Aviv,
Tel Aviv 69978, Israel}
\footnotetext[2]{Supported in part by the Adams Fellowship Program of the
Israel Academy of Sciences and Humanities and by the ISF.}

Erd\H{o}s and Tur\'an \cite{ET} have proved the following
inequality, which is a quantitative form of Weyl's equidistribution
criterion.

\begin{prop}[Erd\H{o}s -- Tur\'an]\label{p:et.1}
Let $\nu$ be a probability measure on the unit circle $\mathbb{T} = \RR \diagup 2\pi\ZZ$.
Then, for any $n_0 \geq 1$ and any arc $A \subset \mathbb{T}$,
\begin{equation}\label{eq:et.1}
\left| \nu(A) - \frac{\mes A}{2 \pi} \right|
    \leq K_\text{\em\ref{p:et.1}} \left\{ \frac{1}{n_0} + \sum_{n=1}^{n_0} \frac{|\widehat{\nu}(n)|}{n}\right\}~,
\end{equation}
where
\[ \widehat{\nu}(n) = \int_\mathbb{T} \exp(-in\theta) d\nu(\theta)~,\]
and $K_1>0$ is a universal constant.
\end{prop}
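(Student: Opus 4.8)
The plan is to sandwich the indicator $\1_A$ between two trigonometric polynomials of degree at most $n_0$ and to exploit that such polynomials pair with $\nu$ only through the coefficients $\widehat{\nu}(k)$ with $|k|\le n_0$. Write $\mu$ for the normalized Lebesgue measure $d\theta/2\pi$ on $\mathbb{T}$, so that the quantity to be bounded is $\int_{\mathbb{T}}\1_A\,d(\nu-\mu)$.

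First I would record the one algebraic identity that drives everything. If $f(\theta)=\sum_{|k|\le n_0}c_k e^{ik\theta}$ is real-valued, then since $\int e^{ik\theta}\,d\mu=0$ for $k\ne 0$ and $\int e^{ik\theta}\,d\nu=\overline{\widehat{\nu}(k)}$, one gets $\int f\,d(\nu-\mu)=\sum_{1\le|k|\le n_0}c_k\,\overline{\widehat{\nu}(k)}$, whence
\begin{equation*}
\Big|\int_{\mathbb{T}} f\,d(\nu-\mu)\Big|\le 2\sum_{k=1}^{n_0}|c_k|\,|\widehat{\nu}(k)|.
\end{equation*}
Thus the $n=0$ term drops out automatically, and it is the size of the coefficients $|c_k|$ of the approximating polynomial that will manufacture the weights $1/k$ appearing in \eqref{eq:et.1}.

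The heart of the matter --- and the step I expect to cost the most work --- is the construction of extremal majorant and minorant polynomials: for every arc $A$ and every $n_0\ge 1$, real trigonometric polynomials $f_\pm$ of degree at most $n_0$ (i.e.\ $\widehat{f_\pm}(k)=0$ for $|k|>n_0$) satisfying (i) $f_-\le\1_A\le f_+$ on $\mathbb{T}$; (ii) $|\widehat{f_\pm}(0)-\mes A/2\pi|\le 1/(n_0+1)$; and (iii) $|\widehat{f_\pm}(k)|\le \frac{1}{n_0+1}+\frac{1}{\pi|k|}$ for $1\le|k|\le n_0$. This is the Beurling--Selberg--Vaaler circle of ideas: one builds the one-sided approximants of the indicator of an interval out of Beurling's majorant of the signum function, and the $1/(\pi|k|)$ decay in (iii) is simply inherited from the exact Fourier coefficients $\frac{1}{2\pi}\int_A e^{-ik\theta}\,d\theta$ of $\1_A$, whose modulus is at most $1/(\pi|k|)$. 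I would either quote this construction or reproduce a self-contained version, for instance by smoothing $\1_A$ against the Fej\'er kernel of degree $n_0$ and correcting it by a controlled amount so as to restore the one-sided inequalities.

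Finally I would assemble the estimate. Using $f_+$ for the upper bound, $\nu(A)-\mes A/2\pi\le(\widehat{f_+}(0)-\mes A/2\pi)+\int f_+\,d(\nu-\mu)$, which by the identity above together with (ii)--(iii) is at most $\frac{1}{n_0+1}+2\sum_{k=1}^{n_0}\big(\frac{1}{n_0+1}+\frac{1}{\pi k}\big)|\widehat{\nu}(k)|$; the symmetric argument with $f_-$ gives the matching lower bound. The only point requiring care is the term $\frac{2}{n_0+1}\sum_{k=1}^{n_0}|\widehat{\nu}(k)|$: rather than estimating $|\widehat{\nu}(k)|\le 1$, which would leave an inadmissible constant, I would use $|\widehat{\nu}(k)|\le n_0\,|\widehat{\nu}(k)|/k$ for $k\le n_0$, so that this term too is absorbed into $2\sum_{k=1}^{n_0}|\widehat{\nu}(k)|/k$. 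Collecting the contributions then yields \eqref{eq:et.1} with $K_{\text{\ref{p:et.1}}}=2+2/\pi$ (or any convenient universal value).
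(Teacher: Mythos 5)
Your argument is correct, but it takes a genuinely different route from the paper's. You work directly on $\mathbb{T}$ and sandwich $\1_A$ between the Beurling--Selberg--Vaaler extremal trigonometric polynomials $f_\pm$ of degree $n_0$, reading off the discrepancy from their Fourier coefficients; as you yourself note, all the difficulty is concentrated in the existence of $f_\pm$ with properties (i)--(iii), which is a genuine theorem (Vaaler) rather than a routine smoothing computation, and it belongs precisely to the family of kernel-based one-sided approximations that the authors announce they are deliberately avoiding. The paper instead folds $\nu$ onto $[-1,1]$ via $\theta\mapsto\cos\theta$ (reducing to Proposition~\ref{p:et.2}) and builds its majorant $P$ and minorant $Q$ of a half-line indicator as Hermite interpolation polynomials at the zeros of $T_{n_0}$, in the Chebyshev--Markov--Stieltjes manner; the needed coefficient bounds $p_0-q_0\le C/n_0$ and $|p_n|,|q_n|\le C/n$ then come from the identity $P-Q=R^2$ and explicit integral estimates. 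Both are majorant/minorant proofs, so they are structurally parallel; yours buys an essentially optimal explicit constant ($2+2/\pi$) at the price of quoting the Selberg construction, while the paper's is self-contained and, more importantly, uses nothing about the group structure of $\mathbb{T}$ --- it applies verbatim to other orthogonal polynomial systems, which is exactly what lets the authors prove Proposition~\ref{p:et.w} for $\sigma_2,U_n$ and derive the error bound in Wigner's law. Your closing device for absorbing the term $\frac{2}{n_0+1}\sum_{k\le n_0}|\widehat{\nu}(k)|$ into $2\sum_{k\le n_0}|\widehat{\nu}(k)|/k$ is fine, and the bookkeeping with $f_\pm$ is sound.
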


A number of proofs have appeared since then, an especially elegant one given
by Ganelius \cite{Ga}. In most of the proofs, the indicator of $A$ is approximated
by its convolution with an appropriate (Fej\'er-type) kernel. We shall present
another proof, based on the arguments developed by Chebyshev, Markov, and Stieltjes to prove
the Central Limit Theorem (see Akhiezer \cite[Ch.~3]{A}). In this approach, the indicator
of $A$ is approximated from above and from below by certain interpolation polynomials.
The argument does not use the group structure on $\mathbb{T}$, and thus works in a
more general setting.

\vspace{2mm}\noindent
In Section~\ref{s:i}, we formulate a slightly different proposition and show that
it implies Proposition~\ref{p:et.1}. In Section~\ref{s:cmt} we reproduce the part
of the arguments of Chebyshev, Markov, and Stieltes that we need for the sequel.
For the convenience of the reader, we try to keep the exposition self-contained.
In Section~\ref{s:pr.1} we apply the construction of Section~\ref{s:cmt} to prove
the Erd\H{o}s--Tur\'an inequality. In Section~\ref{s:w} we formulate another inequality
that can be proved using the same construction. As an application to random matrices,
we use an inequality from \cite{FS} and deduce a form of Wigner's law with a reasonable
error estimate.

\section{Introduction}\label{s:i}

\noindent Let the measure $\sigma_{1}$ on $\RR$ be defined by
\[ d\sigma_1(x) = \frac{1}{\pi} (1 - x^2)_+^{-1/2} \, dx~.\]
Let $T_n(\cos \theta) = \cos n\theta$ be the Chebyshev polynomials of the first kind;
these are orthogonal with respect to $\sigma_1$. We shall prove the Erd\H{o}s -- Tur\'an
inequality in the following form:

\begin{prop}\label{p:et.2}
Let $\mu$ be a probability measure on $\RR$ \footnote{We do not assume that
$\supp \mu \subset [-1, 1]$}. Then, for any $n_0 \geq 1$
and any $x_0 \in \RR$,
\begin{equation}\label{eq:et.2}
\big| \mu[x_0, +\infty) - \sigma_1[x_0, +\infty) \big|
    \leq K_\text{\em\ref{p:et.2}} \left\{ \frac{1}{n_0}
        + \sum_{n=1}^{n_0} \frac{1}{n} \left|\int_\RR T_n(x) d\mu(x)\right| \right\}~.
\end{equation}
\end{prop}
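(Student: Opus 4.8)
The plan is to sandwich the indicator $\1_{[x_0,+\infty)}$ between two real polynomials of degree at most $n_0$ and then integrate the resulting pointwise inequalities against both $\mu$ and $\sigma_1$. Concretely, I would invoke the Chebyshev--Markov--Stieltjes construction reproduced in Section~\ref{s:cmt}, applied to the arcsine measure $\sigma_1$ and its orthogonal polynomials $T_n$, to produce polynomials $P^-,P^+$ with $\deg P^\pm \le n_0$ such that
\[ P^-(x) \le \1_{[x_0,+\infty)}(x) \le P^+(x) \quad \text{for every } x \in \RR, \]
together with the gap estimate
\[ \int_\RR (P^+ - P^-)\, d\sigma_1 \le \frac{C}{n_0}. \]
The decisive feature of this construction, as opposed to a mollification on $\mathbb{T}$, is that the one-sided bounds hold on \emph{all} of $\RR$: the differences $P^+ - \1_{[x_0,+\infty)}$ and $\1_{[x_0,+\infty)} - P^-$ are manifestly nonnegative because they factor as a perfect square times a monotone linear factor, so no assumption on $\supp \mu$ is needed (this is exactly the point of the footnote). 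Specializing to the Chebyshev case, the substitution $x = \cos\theta$ turns $\sigma_1$ into the uniform measure on $[0,\pi]$, the interpolation nodes into equally spaced points, and the Christoffel numbers into the single value $\sim 1/n_0$, which makes the gap estimate above explicit.

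Granting these polynomials, the rest is bookkeeping. Integrating the sandwich against $\mu$ gives $\int P^-\,d\mu \le \mu[x_0,+\infty) \le \int P^+\,d\mu$, and likewise for $\sigma_1$; subtracting and using $\int(P^+-P^-)\,d\sigma_1 \le C/n_0$ yields
\[ \big| \mu[x_0,+\infty) - \sigma_1[x_0,+\infty) \big| \le \max\!\Big( \big| \textstyle\int P^+\, d(\mu-\sigma_1) \big|,\; \big| \int P^-\, d(\mu-\sigma_1) \big| \Big) + \frac{C}{n_0}. \]
Now I would expand $P^\pm = \sum_{k=0}^{n_0} a_k T_k$ in the Chebyshev basis. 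Since $T_0 \equiv 1$ and both $\mu,\sigma_1$ are probability measures, the $k=0$ term cancels; since $\int T_k\, d\sigma_1 = 0$ for $k \ge 1$ by orthogonality, the entire $\sigma_1$-contribution drops out, leaving
\[ \int_\RR P^\pm\, d(\mu - \sigma_1) = \sum_{k=1}^{n_0} a_k \int_\RR T_k(x)\, d\mu(x). \]
Taking absolute values reduces everything to bounding $\sum_{k=1}^{n_0} |a_k|\,\big|\int T_k\,d\mu\big|$.

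The main obstacle, and the only genuinely analytic step, is the coefficient estimate $|a_k| \le C/k$ for $1 \le k \le n_0$, which is precisely what produces the weight $1/k$ in \eqref{eq:et.2}. In the $\theta$-variable the $a_k$ are the cosine--Fourier coefficients of the trigonometric majorant/minorant of the arc indicator $\1_{[-\theta_0,\theta_0]}$, and the target decay reflects the fact that the arc indicator itself has Fourier coefficients of size $\sim 1/k$, which a good one-sided approximant cannot substantially worsen. I would establish $|a_k| \le C/k$ directly from the explicit interpolation formula for $P^\pm$ furnished by Section~\ref{s:cmt}, estimating the coefficients uniformly in $x_0$ (equivalently in $\theta_0$); the equally spaced nodes and constant Christoffel numbers of the Chebyshev case should make this computation transparent. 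Once this is in hand, combining the displayed inequalities gives
\[ \big| \mu[x_0,+\infty) - \sigma_1[x_0,+\infty) \big| \le \frac{C}{n_0} + C\sum_{k=1}^{n_0} \frac{1}{k}\Big| \int_\RR T_k(x)\, d\mu(x) \Big|, \]
which is \eqref{eq:et.2} with $K_{\ref{p:et.2}} = C$.
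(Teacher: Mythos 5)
Your outline follows the paper's route exactly: one-sided polynomial approximation of $\1_{[x_0,+\infty)}$ via the Chebyshev--Markov--Stieltjes interpolants of Section~\ref{s:cmt}, expansion in the basis $\{T_k\}$, cancellation of the $\sigma_1$-terms by orthogonality, and reduction of everything to the gap bound $\int(P^+-P^-)\,d\sigma_1\le C/n_0$ and the coefficient bound $|a_k|\le C/k$. So the architecture is right. But the two quantitative claims on which the whole argument rests are asserted rather than proved, and they are not ``transparent'': in the paper they occupy all of Section~\ref{s:pr.1}. There $p_0-q_0=\int R^2\,d\sigma_1$ and the coefficients are controlled by passing to the variable $\theta$ and splitting $[0,\pi]$ into regions according to the distance from $\theta_0$, trading the factor $\sin^2\theta_0$ coming from $T_{n_0}'(x_{k_0})^{-2}$ against the singularity of $(\cos\theta-\cos\theta_0)^{-2}$; uniformity in $x_0$ is exactly the delicate point. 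Moreover, ``equal spacing plus constant Christoffel numbers'' does not by itself give $|a_k|\le C/k$: the correct mechanism is \eqref{eq:pn}, where the main term $\int_{x_{k_0}}^\infty T_k\,d\sigma_1=\sin(k\theta_0)/(k\pi)$ supplies the $1/k$ decay and the error term $\int R^2|T_k|\,d\sigma_1$ is only shown to be $O(1/n_0)$, which suffices because $k\le n_0$. Without some version of this computation the proposition is not proved.

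Two further points you skip. The CMS majorant and minorant exist only when $x_0$ is a node, i.e.\ a zero of the orthogonal polynomial used for interpolation, and they have degree $2n_0-2$, not $n_0$; so one must run the construction with $T_{m_0}$, $m_0=\lceil n_0/2\rceil+1$, and then treat a general $x_0$ by applying the resulting inequality at the two adjacent zeros of $T_{m_0}$ (whose gap has $\sigma_1$-measure $O(1/n_0)$). This is routine but must be said. Finally, your justification of one-sidedness is off: $P^\pm-\1_{[x_0,+\infty)}$ is not a polynomial and does not ``factor as a square times a linear factor''; the paper proves $Q\le\1\le P$ by counting the zeros of the derivative of the Hermite interpolant. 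What \emph{is} a perfect square is the difference $P-Q=R^2$, which is what makes the gap estimate and the error term in \eqref{eq:pn} tractable. Your conclusion that no support assumption on $\mu$ is needed is nevertheless correct, since the pointwise sandwich does hold on all of $\RR$.
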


\begin{proof}[Proposition~\ref{p:et.2} implies Proposition~\ref{p:et.1}]
Let $\nu$ be a measure on $\mathbb{T}$, and let $A \subset \mathbb{T}$ be
an arc. Rotate $\mathbb{T}$ (together with $\nu$ and $A$) moving the center of $A$ to $0$; this
does not change the right-hand side of (\ref{eq:et.1}).

Denote $\nu_1(B) = \nu(B) + \nu(-B)$; $\nu_1$ is a measure on $[0, \pi]$. The change of variables
$x = \cos \theta$ pushes it forward to $\mu_1$ on $[-1, 1]$. Now apply Proposition~\ref{p:et.2}
to $\mu_1$, observing that
\[ \int_{-1}^1 T_n(x) d\mu_1(x) = \Re \, \widehat{\nu}(n)~. \]
\end{proof}

\section{The Chebyshev--Markov--Stieltjes construction}\label{s:cmt}
Let $\sigma$ be a probability measure on $\RR$ (with finite moments); let $S_0,S_1,\cdots$
be the orthogonal polynomials with respect to $\sigma$. For a probability measure $\mu$ on $\RR$,
denote
\[ \eps_n = \eps_n(\mu) = \int_\RR S_n(x) d\mu(x)~, \quad n=1,2,3,\cdots~.\]
We shall estimate the distance between $\mu$ and $\sigma$ in terms of the numbers $\eps_n$.

Let $x_1 < x_2 < \cdots < x_{n_0}$ be the zeros of $S_{n_0}$. Construct the polynomials $P,Q$ of
degree $\leq 2n_0 - 2$, so that
\[\begin{split}
    P(x_k) &= \begin{cases}
                0, &1 \leq k < k_0 \\
                1, &k_0 \leq k \leq n_0
             \end{cases};
    \qquad P'(x_k) = 0 \quad \text{for $k \neq k_0$}; \\
    Q(x_k) &= \begin{cases}
                0, &1 \leq k \leq k_0 \\
                1, &k_0 < k \leq n_0
             \end{cases};
    \qquad Q'(x_k) = 0 \quad \text{for $k \neq k_0$}~.
\end{split}\]

\begin{lemma}[Chebyshev--Markov--Stieltjes]
\[ P \geq \mathbf{1}_{\big[x_{k_0}, +\infty\big)} \geq \mathbf{1}_{\big(x_{k_0}, +\infty\big)}\geq Q~. \]
\end{lemma}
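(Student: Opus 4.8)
The plan is to establish the two nontrivial inequalities $P \ge \mathbf{1}_{[x_{k_0},+\infty)}$ and $Q \le \mathbf{1}_{(x_{k_0},+\infty)}$ pointwise; the middle inequality is immediate, since the two indicators differ only at $x_{k_0}$, where the left one equals $1$ and the right one equals $0$. I shall treat $P$ in detail, the argument for $Q$ being entirely parallel. First I would record that $P$ is well defined: the $n_0$ prescribed values together with the $n_0-1$ prescribed derivatives are $2n_0-1$ linear conditions on the $2n_0-1$ coefficients of a polynomial of degree $\le 2n_0-2$, and Hermite interpolation at distinct nodes is nonsingular.

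The heart of the matter is a count of the critical points of $P$. Since $P(x_k)=P'(x_k)=0$ for $k<k_0$ and $P(x_k)=1$, $P'(x_k)=0$ for $k>k_0$, each such $x_k$ is a double zero of $P$ or of $P-1$, hence a simple zero of $P'$; this already furnishes $n_0-1$ zeros. Applying Rolle's theorem on every gap whose endpoints are consecutive nodes carrying equal values of $P$ produces one further zero of $P'$ strictly inside: this occurs on the $k_0-2$ gaps of the left cluster $\{x_1,\dots,x_{k_0-1}\}$, on the gap $(x_{k_0},x_{k_0+1})$, and on the $n_0-k_0-1$ gaps of the right cluster, for $n_0-2$ interior zeros in all. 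Crucially, no zero is forced on $(x_{k_0-1},x_{k_0})$, where $P$ passes from $0$ to $1$. Thus $P'$ has at least $(n_0-1)+(n_0-2)=2n_0-3$ distinct zeros; since $\deg P' \le 2n_0-3$, these are all of them, each is simple, and $\deg P=2n_0-2$ exactly. Every critical point of $P$ has therefore been located, and $P$ is strictly monotone between consecutive ones.

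With the full critical structure in hand I would read off the sign. On $(x_{k_0-1},x_{k_0})$ there is no critical point, so $P$ increases monotonically from $0$ to $1$; in particular $P'(x_{k_0})>0$. Because all critical points are simple, the extrema strictly alternate between minima and maxima, so it suffices to anchor the pattern. The monotone passage just described forces $x_{k_0-1}$ to be a local minimum at height $0$ and $x_{k_0+1}$ to be a local minimum at height $1$; propagating the alternation outward shows that every node of the left cluster is a local minimum at height $0$ and every node from $x_{k_0+1}$ onward is a local minimum at height $1$, the interior critical points being the intervening maxima. Finally, since $x_1$ and $x_{n_0}$ are local minima and there are no critical points outside $[x_1,x_{n_0}]$, the function $P$ is strictly monotone on each outer ray and tends to $+\infty$ there. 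Hence $P\ge 0$ on $(-\infty,x_{k_0})$ and $P\ge 1$ on $[x_{k_0},+\infty)$, i.e.\ $P\ge \mathbf{1}_{[x_{k_0},+\infty)}$. For $Q$ the same count applies, but now the monotone passage sits on $(x_{k_0},x_{k_0+1})$, which anchors every node as a local maximum at height $0$ (on the left) or $1$ (on the right), with $Q\to-\infty$ on the outer rays, yielding $Q\le \mathbf{1}_{(x_{k_0},+\infty)}$.

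I expect the main obstacle to be the bookkeeping that converts the Rolle count into a sign statement: one must verify that the $2n_0-3$ zeros of $P'$ exhaust its degree, so that no unaccounted oscillation can push $P$ below the indicator, and then track the minimum/maximum alternation with the correct parity, anchored by the unique monotone interval across $x_{k_0}$. The degenerate cases $k_0=1$ and $k_0=n_0$, in which one cluster is empty, should be checked separately, but they are simpler.
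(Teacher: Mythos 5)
Your proof is correct and follows essentially the same route as the paper's: you count the zeros of $P'$ (the $n_0-1$ nodes $x_k$, $k\neq k_0$, plus the $n_0-2$ Rolle points in the gaps of constancy), match this against $\deg P'\le 2n_0-3$ to exclude any further critical points, and propagate the minimum/maximum alternation outward from the single monotone passage across $(x_{k_0-1},x_{k_0})$. Your write-up is somewhat more detailed than the paper's (well-posedness of the Hermite interpolation, the boundary cases $k_0=1,n_0$, the behaviour on the outer rays), but the underlying argument is identical.
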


\begin{proof}
Let us prove for example the first inequality. The derivative $P'$ of $P$ vanishes at $x_k$,
$k \neq k_0$, and also at intermediate points $x_k < y_k < x_{k+1}$, $k \neq k_0,n_0$. The degree
of $P'$ is at most $2 n_0 - 3$, hence it has no more zeroes.

\noindent Now, $P(x_{k_0}) > P(x_{k_0 - 1})$; hence $P$ is increasing on $(x_{k_0-1}, y_{k_0 +1})$.
Therefore $P'$ is decreasing on $(y_{k_0 + 1}, x_{k_0+2})$, increasing on $(x_{k_0 + 2}, y_{k_0 + 3})$,
et cet. Thus $P(x) \geq 1$ for $x \geq x_{k_0}$. Similarly, $P(x) \geq 0$ for $x < x_{k_0}$.
\end{proof}

Let $P = \sum_{n=0}^{n_0} p_n S_n$, $Q = \sum_{n=0}^{n_0} q_n S_n$. Then
\[\begin{split}
\mu[x_{k_0}, +\infty)
    &\leq \int_\RR P(x) d\mu(x)
    = p_0 + \sum_{n=1}^{2n_0-2} \eps_n p_n \\
    &= q_0 + (p_0 - q_0) + \sum_{n=1}^{2n_0-2} \eps_n p_n \\
    &\leq \sigma(x_{k_0}, +\infty) + (p_0 - q_0) + \sum_{n=1}^{2n_0-2} |\eps_n| |p_n|~.
\end{split}\]
Similarly,
\[ \mu(x_{k_0}, +\infty) \geq \sigma[x_{k_0}, +\infty) - (p_0 - q_0)
    - \sum_{n=1}^{2n_0-2} |\eps_n| |q_n|~.\]
Therefore
\begin{equation}\label{eq:d}
\big| \mu[x_{k_0}, +\infty) - \sigma[x_{k_0}, +\infty) \big|
    \leq (p_0 - q_0) + \sum_{n=1}^{2n_0-2} |\eps_n| \max(|p_n|,|q_n|)~.
\end{equation}
Thus we need to estimate $p_0-q_0$, $|p_n|$, $|q_n|$. This can be done using the following
observation (which we have also used in \cite{me}.) Let $R$ be the Lagrange interpolation
polynomial of degree $n_0 - 1$, defined by
\[ R(x_k) = \delta_{kk_0}~, \quad k = 1,2,\cdots,n_0~.\]
Equivalently,
\begin{equation}\label{eq:r}
R(x) = \frac{S_{n_0}(x)}{S_{n_0}'(x_{k_0}) (x - x_{k_0})}~.
\end{equation}

\begin{lemma} $P-Q=R^2$.
\end{lemma}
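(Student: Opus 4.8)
The plan is to prove the identity by showing that $P-Q$ and $R^2$ are two polynomials of the same degree bound solving the same Hermite interpolation problem at the nodes $x_1 < \cdots < x_{n_0}$, and then to force equality by a zero count. Set $H = (P-Q) - R^2$. Since $\deg P, \deg Q \leq 2n_0 - 2$ and $\deg R = n_0 - 1$ (so $\deg R^2 \leq 2n_0 - 2$), we have $\deg H \leq 2n_0 - 2$, and the whole argument reduces to showing $H \equiv 0$.

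First I would read off the nodal data of $P-Q$ by subtracting the defining conditions of $P$ and $Q$ node by node: the values cancel at every node except $x_{k_0}$, where $1 - 0 = 1$, giving $(P-Q)(x_k) = \delta_{kk_0}$ for $1 \leq k \leq n_0$; and since $P'(x_k) = Q'(x_k) = 0$ for $k \neq k_0$, we get $(P-Q)'(x_k) = 0$ for every $k \neq k_0$. Next I would compute the nodal data of $R^2$ from $R(x_k) = \delta_{kk_0}$: squaring preserves the values $0$ and $1$, so $R^2(x_k) = \delta_{kk_0}$ as well, while the product rule gives $(R^2)'(x_k) = 2R(x_k)R'(x_k) = 0$ for every $k \neq k_0$, because $R$ vanishes there. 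This is the step that makes the identity true: as $R$ has a simple zero at each $x_k$ with $k \neq k_0$, its square has a double zero there, exactly matching the vanishing derivative of $P-Q$.

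Finally, the uniqueness. The difference $H$ then satisfies $H(x_k) = 0$ for all $k$ and $H'(x_k) = 0$ for every $k \neq k_0$; hence $H$ has a double zero at each of the $n_0 - 1$ nodes $x_k$ with $k \neq k_0$ and at least a simple zero at $x_{k_0}$. Counting with multiplicity, $H$ has at least $2(n_0 - 1) + 1 = 2n_0 - 1$ zeros, while $\deg H \leq 2n_0 - 2 < 2n_0 - 1$, forcing $H \equiv 0$, that is, $P - Q = R^2$. The argument is short, and I do not expect a genuine obstacle; the only point requiring care is the bookkeeping—matching the exact interpolation data, including the single \emph{missing} derivative condition at $k_0$, so that the zero count strictly exceeds the degree bound. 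It is precisely the vanishing of the derivatives of $P$ and $Q$ away from $k_0$ that forces the square $R^2$, rather than $R$ itself, to appear.
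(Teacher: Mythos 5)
Your proof is correct and rests on the same observations as the paper's: $P-Q$ and $R^2$ both take the value $\delta_{kk_0}$ at the nodes and both have double zeros at $x_k$ for $k\neq k_0$, with the degree bound $2n_0-2$ forcing equality. The paper phrases the conclusion via divisibility ($R^2\mid P-Q$ plus the normalization at $x_{k_0}$) while you count zeros of the difference with multiplicity, but these are the same argument in different clothing.
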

\begin{proof}
The polynomial $P-Q$ has multiple zeroes at $x_k$, $k \neq k_0$. Therefore $R^2 \, | \, (P-Q)$.
Also, $\deg R^2 = 2 n_0 - 2 \geq \deg (P-Q)$, and
\[ R^2(x_{k_0}) = 1 = P(x_{k_0}) - Q(x_{k_0})~. \]
\end{proof}

Thus
\begin{equation}\label{eq:pq0}
p_0 - q_0 = \int_\RR R^2(x) d\sigma(x)
\end{equation}
and
\begin{multline}\label{eq:pn}
|p_n| = \left|\int_\RR P(x) S_n(x) d\sigma(x)\right| \\
    \leq \left|\int_{x_{k_0}}^\infty S_n(x) d\sigma(x)\right|
        + \left| \int_\RR (P(x) - \mathbf{1}_{[x_{k_0}, +\infty)}(x)) S_n(x) d\sigma(x) \right| \\
    \leq \left|\int_{x_{k_0}}^\infty S_n(x) d\sigma(x)\right|
        + \int_\RR R^2(x) |S_n(x)| d\sigma(x)~.
\end{multline}
Similarly,
\[ |q_n| \leq |\int_{x_{k_0}}^\infty S_n(x) d\sigma(x)|
        + \int_\RR R^2(x) |S_n(x)| d\sigma(x)~.\]

\section{Proof of Proposition~\ref{p:et.2}}\label{s:pr.1}
We apply the framework of Section~\ref{s:cmt} to $\sigma = \sigma_1$, $S_n = T_n$.
Let $x_{k_0} = \cos \theta_0$, $0 \leq \theta_0 \leq \pi/2$.  Then
\[ T_{n_0}'(\cos \theta_0) \cdot -\sin \theta_0 = - n_0 \sin n \theta_0~,\]
and hence
\[ |T_{n_0}'(x_0)| = \frac{n_0}{|\sin \theta_0|} = \frac{n_0}{\sqrt{1 - x_{k_0}^2}}~.\]
Thus, according to (\ref{eq:pq0}),
\[\begin{split}
p_0 - q_0 &= \int_\RR \frac{T_{n_0}(x)^2}{T_{n_0}'(x_0)^2 (x-x_0)^2} d\sigma_1(x) \\
    &= \frac{\sin^2 \theta_0}{4\pi n_0^2}
        \int_0^\pi \frac{\cos^2 n_0 \theta}
                        {\sin^2 \frac{\theta+\theta_0}{2} \sin^2
                            \frac{\theta-\theta_0}{2}} d\theta~.
\end{split}\]

Now,
\[ \int_0^{\theta_0/2} \leq \int_0^{\theta_0/2} C_1 d\theta / \theta_0^4
    \leq C_1/\theta_0^3 \leq C_2 n_0 / \theta_0^2~, \]
\[ \int_{\theta_0/2}^{\theta_0 - \pi/(3n_0)}
    \leq C_3 \int_{\theta_0/2}^{\theta_0 - \pi/(3n_0)} \frac{d\theta}{\theta_0^2 (\theta-\theta_0)^2}
    \leq \frac{C_4 n_0}{\theta_0^2}~,\]
and similarly
\[ \int_{\theta_0+\pi/(3n_0)}^{\pi} \leq C_5 n_0 / \theta_0^2~.\]
Finally,
\[ |T_{n_0}'(\cos \theta)| = n_0 \frac{|\sin n_0 \theta|}{\sin \theta}
    \geq n_0 / (C_6 \theta_0) \geq |T_{n_0}'(\cos \theta_0)|/C_7\]
for $|\theta - \theta_0| \leq \pi/(3n_0)$, hence
\[ \int_{\theta_0 - \pi/(3n_0)}^{\theta_0+\pi/(3n_0)}
    \frac{T_{n_0}(\cos \theta)^2 d\theta}{T_{n_0}'(\cos \theta_0)^2 (\cos \theta- \cos \theta_0)^2}
        \leq C_8 / n_0~. \]
Therefore
\begin{equation}\label{eq:pq0_1}
p_0 - q_0 \leq C/n_0~.
\end{equation}

\vspace{2mm}\noindent
Next,
\begin{equation}\label{eq:pqn_1.1}
\int_{x_{k_0}}^\infty T_n(x) d\sigma_1(x)
    = \int_0^{\theta_0} \cos n\theta \, \frac{d\theta}{\pi} = \frac{\sin n\theta_0}{n\pi}~;
\end{equation}
\begin{equation*}\begin{split}
\int_\RR R^2(x) |T_n(x)| d\sigma_1(x)
    &= \int_0^\pi \frac{\cos^2 n_0\theta}
                       {\frac{n_0^2}{\sin^2 \theta_0} (\cos \theta - \cos \theta_0)^2}
            |\cos n\theta| \frac{d\theta}{\pi} \\
    &\leq \frac{C_1 \theta_0^2}{n_0^2} \int_0^\pi
        \frac{\cos^2 n_0\theta \, |\cos n\theta| \, d\theta}
             {\sin^2 \frac{\theta+\theta_0}{2} \sin^2 \frac{\theta-\theta_0}{2}}~.
\end{split}\end{equation*}
Now,
\[ \int_0^{\theta_0/2} \leq C_2 /\theta_0^3 \leq C_3 n_0 / \theta_0^2~; \]
\[ \int_{\theta_0/2}^{\theta_0 - \pi/(3n_0)}
    \leq C_4 \int_{\theta_0/2}^{\theta_0 - \pi/(3n_0)} \frac{d\theta}{\theta_0^2 (\theta - \theta_0)^2}
    \leq C_5 n_0 / \theta_0^2~,\]
and similarly
\[ \int_{\theta_0+\pi/(3n_0)}^{\pi} \leq C_6 n_0 / \theta_0^2~;\]
\[ \int_{\theta_0 - \pi/(3n_0)}^{\theta_0 + \pi/(3n_0)} \leq (C_7/n_0) (n_0^2 / \theta_0^2)
    = C_7 n_0/\theta_0^2~.\]
Therefore
\begin{equation}\label{eq:pqn_1.2}
\int_\RR R^2(x) |T_n(x)| d\sigma_1(x) \leq C_8/n_0~.
\end{equation}
Combining (\ref{eq:pn}), (\ref{eq:pqn_1.1}) and (\ref{eq:pqn_1.2}), we deduce:
\begin{equation}\label{eq:pqn_1}
|p_n| \leq C/n~.
\end{equation}
Similarly, $|q_n| \leq C/n$.

\begin{proof}[Proof of Proposition~\ref{p:et.2}]
Substitute (\ref{eq:pq0_1}) and (\ref{eq:pqn_1}) into (\ref{eq:d}), taking
\[ m_0 = \lceil n_0/2 \rceil +1 \]
instead of $n_0$. We deduce that (\ref{eq:et.2}) holds when $x_0 = x_{k_0}$
is a non-negative zero of $T_{m_0}$.  By symmetry, a similar inequality holds
for negative zeroes. For a general $x_0 \in \RR$, apply the inequality to the
two zeroes of $T_{m_0}$ that are adjacent to $x_0$ (one of them may formally
be $\pm \infty$.)
\end{proof}

\section{Another inequality, and an application to Wigner's law}\label{s:w}

Let the measure $\sigma_{2}$ on $\RR$ be defined by
\[ d\sigma_2(x) = \frac{2}{\pi} (1 - x^2)_+^{1/2} \, dx~.\]
Let $U_n(\cos \theta) = \cos n\theta$ be the Chebyshev polynomials of the second kind;
these are orthogonal with respect to $\sigma_2$.

\begin{prop}\label{p:et.w}
Let $\mu$ be a probability measure on $\RR$. Then, for any $n_0 \geq 1$
and any $x_0 \in \RR$,
\begin{multline}\label{eq:et.w}
\big| \mu[x_0, +\infty) - \sigma_2[x_0, +\infty) \big| \\
    \leq K_\text{\em\ref{p:et.w}} \left\{ \frac{\rho(x_0; n_0)}{n_0}
        + \rho(x_0; n_0)^{1/2} \sum_{n=1}^{n_0} n^{-1} \left|\int_\RR U_n(x) d\mu(x)\right| \right\}~,
\end{multline}
where $\rho(x; n_0) = \max(1 - |x|, n_0^{-2})$.
\end{prop}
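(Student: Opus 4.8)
The plan is to run the Chebyshev--Markov--Stieltjes machinery of Section~\ref{s:cmt} with $\sigma = \sigma_2$ and $S_n = U_n$, exactly as Section~\ref{s:pr.1} did for $\sigma_1$ and $T_n$, carrying the semicircle weight through every estimate. The zeros of $U_{n_0}$ are $x_k = \cos\frac{k\pi}{n_0+1}$; writing $x_{k_0} = \cos\theta_0$ with $\theta_0 \in [0,\pi/2]$ (by symmetry we may take $x_{k_0}\geq 0$), differentiating $U_{n_0}(\cos\theta)\sin\theta = \sin((n_0+1)\theta)$ at a zero gives $|U_{n_0}'(x_{k_0})| = (n_0+1)/\sin^2\theta_0 = (n_0+1)/(1-x_{k_0}^2)$. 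Three identities will be used throughout: $d\sigma_2 = \frac{2}{\pi}\sin^2\theta\,d\theta$, so that $U_{n_0}(\cos\theta)^2\,d\sigma_2 = \frac{2}{\pi}\sin^2((n_0+1)\theta)\,d\theta$, and $|U_n(\cos\theta)| = |\sin((n+1)\theta)|/\sin\theta$. Since $\theta_0 \geq \pi/(n_0+1)$ at every zero, one has $\sin^2\theta_0 \asymp \max(\theta_0^2, n_0^{-2}) \asymp \rho(x_{k_0};n_0)$. Hence the goal reduces to proving $p_0 - q_0 \lesssim \sin^2\theta_0/n_0$ and $\max(|p_n|,|q_n|) \lesssim \sin\theta_0/n$, which substituted into (\ref{eq:d}) yield (\ref{eq:et.w}).

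For $p_0 - q_0$ I would insert the derivative into (\ref{eq:pq0}) and (\ref{eq:r}) to obtain
\[ p_0 - q_0 = \frac{\sin^4\theta_0}{2\pi(n_0+1)^2}\int_0^\pi \frac{\sin^2((n_0+1)\theta)}{\sin^2\frac{\theta+\theta_0}{2}\,\sin^2\frac{\theta-\theta_0}{2}}\,d\theta~. \]
The integral is treated by the same four-region split ($[0,\theta_0/2]$, $[\theta_0/2,\theta_0-\frac{\pi}{3n_0}]$, the window $|\theta-\theta_0|\leq\frac{\pi}{3n_0}$, and the symmetric tail) as in Section~\ref{s:pr.1}, giving a bound $\lesssim n_0/\theta_0^2$. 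As $\sin\theta_0 \leq \theta_0$, this produces $p_0 - q_0 \lesssim \sin^4\theta_0/(n_0\theta_0^2) \lesssim \sin^2\theta_0/n_0$; the extra factor $\sin^2\theta_0 = 1 - x_{k_0}^2$ relative to the first-kind computation is precisely what the weight $\rho$ records.

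For the coefficients I would use (\ref{eq:pn}). The boundary term computes explicitly: the product-to-sum formula gives $\int_{x_{k_0}}^\infty U_n\,d\sigma_2 = \frac{1}{\pi}\left[\frac{\sin n\theta_0}{n} - \frac{\sin(n+2)\theta_0}{n+2}\right]$, and rewriting the bracket as $\frac{-2n\cos((n+1)\theta_0)\sin\theta_0 + 2\sin n\theta_0}{n(n+2)}$, together with $|\sin n\theta_0| \leq n\theta_0 \lesssim n\sin\theta_0$ (valid since $\theta_0 \le \frac{\pi}{2}\sin\theta_0$ on $[0,\pi/2]$), bounds it by $\lesssim \sin\theta_0/n$. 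The remaining term is
\[ \int_\RR R^2(x)\,|U_n(x)|\,d\sigma_2(x) = \frac{\sin^4\theta_0}{2\pi(n_0+1)^2}\int_0^\pi \frac{\sin^2((n_0+1)\theta)\,|\sin((n+1)\theta)|}{\sin^2\frac{\theta+\theta_0}{2}\,\sin^2\frac{\theta-\theta_0}{2}\,\sin\theta}\,d\theta~, \]
which differs from the $p_0-q_0$ integral only by the factor $|\sin((n+1)\theta)|/\sin\theta$.

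The main obstacle is exactly this last integral: the weight $\sin^2\theta$ of $\sigma_2$ no longer cancels the $1/\sin\theta$ coming from $|U_n|$, so the integrand acquires apparent singularities at \emph{both} endpoints $\theta = 0$ and $\theta = \pi$. I would split into the same four regions, but near $0$ and near $\pi$ one must not crudely bound the numerator by $1$; instead one uses $|\sin((n+1)\theta)| \leq (n+1)\theta$ near $0$ and $|\sin((n+1)\theta)| \leq (n+1)(\pi-\theta)$ near $\pi$ to absorb the $1/\sin\theta$, after which the estimates run parallel to Section~\ref{s:pr.1} and each region contributes $\lesssim n_0/\theta_0^3$, once one invokes $n \lesssim n_0$ and $\theta_0 \geq \pi/(n_0+1)$ — this lower bound on $\theta_0$ being the origin of the floor $n_0^{-2}$ in $\rho$. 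Multiplying by the prefactor $\sin^4\theta_0/(n_0+1)^2$ gives $\int_\RR R^2|U_n|\,d\sigma_2 \lesssim \sin\theta_0/n$, whence $\max(|p_n|,|q_n|) \lesssim \sin\theta_0/n \asymp \rho^{1/2}/n$. Substituting the two bounds into (\ref{eq:d}) proves (\ref{eq:et.w}) at each non-negative zero of $U_{m_0}$ with $m_0 = \lceil n_0/2\rceil + 1$; by symmetry and by passing to the two adjacent zeros of $U_{m_0}$ — noting that $\rho$ is comparable at neighbouring zeros, including near the edge where consecutive zeros are $O(n_0^{-2})$ apart — one extends the inequality to arbitrary $x_0 \in \RR$, exactly as at the close of Section~\ref{s:pr.1}.
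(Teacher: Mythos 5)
Your proposal is correct and follows exactly the route the paper prescribes: the paper's entire proof of this proposition is the remark that one runs the Chebyshev--Markov--Stieltjes machinery of Section~\ref{s:cmt} with $\sigma_2$ and $U_n$ in place of $\sigma_1$ and $T_n$, in parallel with the proof of Proposition~\ref{p:et.2}. Your write-up is a sound execution of that plan, correctly identifying where the extra factors of $\sin\theta_0$ (hence $\rho$ and $\rho^{1/2}$) enter and how the $1/\sin\theta$ from $|U_n|$ is absorbed near the endpoints.
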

Observe that $\rho \leq 1$. Similar inequalities with $1$ instead of $\rho$ have been
proved by Grabner \cite{G} and Voit \cite{V}. On the other hand, the dependence on $x$
in (\ref{eq:et.w}) is sharp, in the following sense: for any $x_0$, there exists a probability
measure $\mu$ on $\RR$ such that $\int_\RR U_n(x) d\mu(x) = 0$ for $1 \leq n \leq n_0$, and
\[ \big| \mu[x_0, +\infty) - \sigma_2[x_0, +\infty) \big| \geq C^{-1} \rho(x_0; n_0)/n_0~,\]
where $C>0$ is independent of $n_0$; cf.\ Akhiezer \cite[Ch.~3]{A}.

The proof of Proposition~\ref{p:et.w} is parallel to that of Proposition~\ref{p:et.2}:
we apply the inequalities of Section~\ref{s:cmt} to the measure $\sigma_2$ and the polynomials
$U_n$.

Grabner \cite{G} and Voit \cite{V} have applied their inequalities to estimate the cap
discrepancy of a measure on the sphere. We present an application to random matrices.

\vspace{2mm}\noindent
Let $A$ be an $N \times N$ Hermitian random matrix, such that
\begin{enumerate}
\item $\{ A_{uv} \, | \, 1 \leq u \leq v \leq N \}$ are independent,
\item $\EE |A_{uv}|^{2k} \leq (Ck)^k$, $k=1,2,\cdots$;
\item the distribution of every $A_{uv}$ is symmetric, and $\EE |A_{uv}|^2 = 1$ for $u \neq v$.
\end{enumerate}
Let $\mu_A = N^{-1} \sum_{k=1}^N \delta_{\lambda_k(A)/(2\sqrt{N})}$ be the empirical measure
of the eigenvalues of $A$ (which is a random measure). By \cite[Theorem~1.5.3]{FS},
\[ 0 \leq \EE \int_\RR U_n(x) d\mu_A(x) \leq Cn/N~, \quad 1 \leq n \leq N^{1/3}~.\]
Applying Proposition~\ref{p:et.w}, we deduce the following form of Wigner's law:
\begin{prop} Under the assumptions 1.-3.,
\begin{multline}\label{eq:wl}
\left| \EE \, \# \left\{ k \, \big| \, \lambda_k > 2\sqrt{N} x_0 \right\}
    - N \sigma_2(x_0, +\infty) \right| \\
    \leq C \max\left(N^{2/3}(1-|x_0|), 1\right)
\end{multline}
for any $x_0 \in \RR$.
\end{prop}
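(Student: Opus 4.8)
The plan is to apply Proposition~\ref{p:et.w} to the \emph{averaged} (expected) empirical spectral measure $\overline{\mu} = \EE \mu_A$, defined by $\overline{\mu}(B) = \EE[\mu_A(B)]$. Since each realization of $\mu_A$ is a probability measure on $\RR$, so is $\overline\mu$; and assumption~2 gives $\EE \tr A^{2m} < \infty$ for every $m$, so $\overline\mu$ has finite moments of all orders and, $U_n$ being a polynomial of degree $n$, the identity $\int_\RR U_n \, d\overline\mu = \EE \int_\RR U_n \, d\mu_A$ holds by linearity of expectation. The quantity to be estimated is then simply $N$ times the discrepancy of $\overline\mu$ at $x_0$: from $\mu_A(x_0,+\infty) = N^{-1}\#\{k \mid \lambda_k > 2\sqrt{N} x_0\}$ we get $\EE\#\{k \mid \lambda_k > 2\sqrt{N} x_0\} = N\,\overline\mu(x_0,+\infty)$, while $\sigma_2$ is absolutely continuous, so $\sigma_2(x_0,+\infty) = \sigma_2[x_0,+\infty)$. (Replacing the closed half-line of Proposition~\ref{p:et.w} by the open one is harmless, since the Chebyshev--Markov--Stieltjes bounds of Section~\ref{s:cmt} squeeze $\mu[x_{k_0},+\infty)$ and $\mu(x_{k_0},+\infty)$ between the \emph{same} two polynomials $Q \le \cdots \le P$.)

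Applying Proposition~\ref{p:et.w} to $\overline\mu$ and multiplying by $N$, I would obtain
\[ \left| \EE\#\{k \mid \lambda_k > 2\sqrt{N} x_0\} - N\sigma_2(x_0,+\infty) \right| \le K N \left\{ \frac{\rho(x_0;n_0)}{n_0} + \rho(x_0;n_0)^{1/2} \sum_{n=1}^{n_0} \frac{1}{n}\left| \EE\int_\RR U_n(x)\, d\mu_A(x)\right| \right\}, \]
where $K$ is the constant of that proposition. The range of validity of the moment estimate borrowed from \cite{FS} forces the choice $n_0 = \lfloor N^{1/3}\rfloor$, so that $|\EE\int_\RR U_n \, d\mu_A| \le Cn/N$ holds for \emph{every} index occurring in the sum. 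With this choice the sum collapses, $\sum_{n=1}^{n_0} n^{-1}\cdot Cn/N = C n_0/N$, and, writing $\rho = \rho(x_0;n_0)$, the right-hand side is bounded by a constant multiple of $N\rho/n_0 + \rho^{1/2}n_0$.

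It remains to balance the two terms and read off the claimed shape of the bound. Since $n_0 = \lfloor N^{1/3}\rfloor$, for $N$ large one has $N/n_0 \asymp N^{2/3}$ and $n_0^{-2}\asymp N^{-2/3}$; hence the first term satisfies $N\rho/n_0 \asymp N^{2/3}\rho = N^{2/3}\max(1-|x_0|,\,n_0^{-2}) \asymp \max\!\big(N^{2/3}(1-|x_0|),\,1\big)$, which is precisely the right-hand side of~(\ref{eq:wl}). This is the step where the $x_0$-dependence of Proposition~\ref{p:et.w} (the factor $\rho$ rather than $1$) is indispensable; without it one would only recover the uniform error $O(N^{2/3})$. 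The main point left to verify is that the sum term is dominated by the leading one: because $\rho \ge n_0^{-2}$ we have $\rho^{-1/2} \le n_0 \asymp N^{1/3}$, so $\rho^{1/2}n_0 \asymp \rho^{1/2}N^{1/3} = (N^{2/3}\rho)\cdot N^{-1/3}\rho^{-1/2} \lesssim N^{2/3}\rho$. Thus both terms are $\le C\max(N^{2/3}(1-|x_0|),1)$, which proves the proposition; the finitely many small values of $N$ (for which the asymptotic relations above are not yet in force) are covered trivially, since the left-hand side of~(\ref{eq:wl}) never exceeds $N$ while its right-hand side is at least $1$.
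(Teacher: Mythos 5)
Your proposal is correct and is precisely the argument the paper intends (the paper states the result with no details beyond "applying Proposition~\ref{p:et.w}"): apply the discrepancy inequality to the averaged empirical measure with $n_0=\lfloor N^{1/3}\rfloor$, insert the moment bound $|\EE\int U_n\,d\mu_A|\le Cn/N$ from \cite{FS}, and check that both resulting terms are $O(\max(N^{2/3}(1-|x_0|),1))$ using $\rho\ge n_0^{-2}$. The bookkeeping (open versus closed half-lines, small $N$) is handled correctly.
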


Better bounds are available for $x \in (-1 + \eps, 1-\eps)$ (cf.\ G\"otze
and Tikhomirov \cite{GT}, Erd\H{o}s, Schlein, and Yau \cite{ESY}). On the other hand,
for $x$ very close to $\pm 1$, the right-hand side in our bound is of order $O(1)$, which
is in some sense optimal.

\begin{rmk}
A similar method allows to bound the variance of the number of eigenvalues on a half-line:
\[ \VV \# \left\{ k \, \big| \, \lambda_k > 2\sqrt{N} x_0 \right\}
    \leq C \max\left(N^{2/3}(1-|x_0|), 1\right)^{5/2}; \]
therefore one can also bound the probability that
$\# \left\{ k \, \big| \, \lambda_k > 2\sqrt{N} x_0 \right\}$
deviates from $N \sigma_2(x_0, +\infty)$.
\end{rmk}

\end{document}